\numberwithin{equation}{section}
\newcommand{\cD}{\mathcal{D}}
\newcommand{\cR}{\mathcal{R}}
\newcommand{\cU}{\mathcal{U}}
\newcommand{\N}{\mathbb{N}}
\newcommand{\R}{\mathbb{R}}
\newcommand{\Z}{\mathbb{Z}}
\newcommand{\oB}{\overline{B}}
\DeclareMathOperator{\dist}{dist}
\DeclareMathOperator{\diam}{diam}
\DeclareMathOperator{\mesh}{mesh}
\DeclareMathOperator{\asdim}{asdim}
\theoremstyle{plain}
\newtheorem{theorem}{Theorem}[section]
\newtheorem{lemma}[theorem]{Lemma}
\theoremstyle{definition}
\newtheorem{definition}[theorem]{Definition}
\newtheorem{remark}[theorem]{Remark}
\newtheorem{example}[theorem]{Example}
\patchcmd{\subsection}{-.5em}{.5em}{}{}
\patchcmd{\subsubsection}{-.5em}{.5em}{}{}
\begin{document}

\title{The (largest) Lebesgue number and its relative version}
\subjclass[2020]{\textbf{54E35}, 51F30}
\keywords{the Lebesgue number, mesh of a cover, asymptotic dimension}
\author{Vera Toni\' c}

\address{Department of Mathematics\\
University of Rijeka\\
51000 Rijeka\\ Croatia}
\email{vera.tonic@math.uniri.hr}

\begin{abstract}
In this paper we compare different definitions of the (largest) Lebesgue number of a cover $\cU$ for a metric space $X$. We also introduce the relative version for the Lebesgue number of a covering family $\cU$ for a subset $A\subseteq X$, and justify the relevance of introducing it by giving a corrected statement and proof of the Lemma 3.4 from \cite{BuyaloLebedeva}, involving $\lambda$-quasi homothetic maps with coefficient $R$ between metric spaces and the comparison of the mesh and the Lebesgue number of a covering family for a subset on both sides of the map.
\end{abstract}
\maketitle

\section{Introduction}

In any basic course on point-set topology, one is usualy introduced to the notion of \emph{a Lebesgue number} of an open cover for a (non-empty) compact metric space, in the form of Lebesgue covering lemma, which is commonly given in one of the two following versions:

\begin{lemma}[Lebesgue covering lemma, Version $1$] (\cite[Thm.IV.5.4]{Hu}, \cite{Kelley}, \cite{Dugunji}, \cite{Engelking})\label{Leb-Version1}
For every open cover $\cU$ of a compact metric space $X$, there exists a real number $\lambda>0$ such that each open ball of radius $\lambda$ has to be contained in some element of $\cU$. That is, the family of open balls $\{ B(x,\lambda)\ | \ x\in X\}$ is a refinement of $\cU$. Any such $\lambda$ is referred to as a Lebesgue number of the cover $\cU$.
\end{lemma}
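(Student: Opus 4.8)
The plan is to combine the compactness of $X$ with the openness of the elements of $\cU$, using a doubling trick on the radii so that the triangle inequality delivers the required containment. First I would fix, for each point $x\in X$, an element $U_x\in\cU$ with $x\in U_x$, which is possible because $\cU$ covers $X$. Since $U_x$ is open, there is a radius $r_x>0$ with $B(x,2r_x)\subseteq U_x$, and the family of half-sized balls $\{B(x,r_x)\mid x\in X\}$ is then itself an open cover of $X$.

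Next I would invoke compactness to extract a finite subcover $B(x_1,r_{x_1}),\dots,B(x_n,r_{x_n})$, and set $\lambda=\min\{r_{x_1},\dots,r_{x_n}\}$, which is strictly positive as the minimum of finitely many positive numbers. Given any $y\in X$, I would locate an index $i$ with $y\in B(x_i,r_{x_i})$ and check that $B(y,\lambda)\subseteq U_{x_i}$: for $z\in B(y,\lambda)$ the triangle inequality gives $d(z,x_i)\le d(z,y)+d(y,x_i)<\lambda+r_{x_i}\le 2r_{x_i}$, so $z\in B(x_i,2r_{x_i})\subseteq U_{x_i}$. Hence each ball of radius $\lambda$ is contained in some element of $\cU$, which is exactly the assertion.

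The step I expect to matter most is the doubling of the radius. If one instead chose $r_x$ merely so that $B(x,r_x)\subseteq U_x$ and took the minimum over a finite subcover, the triangle inequality would fail to close up, since a point $y$ near the boundary of $B(x_i,r_{x_i})$ could have $B(y,\lambda)$ protruding outside $U_{x_i}$; the factor $2$ supplies precisely the slack needed. An alternative route, which I would keep in reserve, is to show that the function $f(x)=\sup_{U\in\cU}\dist(x,X\setminus U)$ is everywhere positive and $1$-Lipschitz, hence continuous, and then take $\lambda=\min_{x\in X}f(x)>0$ using compactness; there the main work shifts to verifying the continuity of $f$ and that its defining supremum is attained as a positive value at each point.
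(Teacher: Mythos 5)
Your proof is correct, but it follows a different route from the one the paper sketches. You use the classical ``doubling trick'': pick $B(x,2r_x)\subseteq U_x$, extract a finite subcover of the half-radius balls $B(x_i,r_{x_i})$, set $\lambda=\min_i r_{x_i}$, and close up with the triangle inequality --- this is a purely metric argument that needs no continuity considerations and, incidentally, no special treatment of the trivial case $X\in\cU$. The paper instead follows Kelley: reduce $\cU$ to a finite subcover $\{U_1,\dots,U_n\}$ (assuming no element equals $X$), define $f(x)=\max_i \dist(x,X\setminus U_i)$, observe that $f$ is continuous and everywhere positive, and take $\lambda=\min_X f>0$ by compactness. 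That functional approach is precisely what motivates the paper's central Definition~\ref{def: Lebesgue BS}, $L(\cU)=\inf_{x\in X}\sup_{U\in\cU}\dist(x,X\setminus U)$, so the paper's choice of proof is dictated by where it is headed, while yours is the more economical path to the bare statement. Your ``reserve'' argument is essentially the paper's proof, and you are right to flag the attainment of the supremum as the delicate point: with an infinite cover the supremum $\sup_{U\in\cU}\dist(y,X\setminus U)\geq\lambda$ need not be attained (compare Example~\ref{ex-balls-1}), so one must either pass to a finite subcover first, as the paper does (turning the $\sup$ into a $\max$), or settle for radii strictly below $\min_X f$, as in Lemma~\ref{BS-def-L implies radius def}.
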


\begin{lemma}[Lebesgue covering lemma, Version $2$](\cite[Cor.IV.5.6]{Hu}, \cite{Munkres}) \label{Leb-Version2}
For every open cover $\cU$ of a compact metric space $X$, there exists a real number $\overline\lambda>0$ such that any subset $A$ of $X$ with $\diam A<\overline\lambda$ has to be contained in some element of $\cU$. Any such $\overline\lambda$ is referred to as a Lebesgue number of the cover $\cU$.
\end{lemma}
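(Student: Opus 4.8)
The plan is to deduce Version 2 directly from Version 1, which is also how the cited source treats it (\cite[Cor.~IV.5.6]{Hu}). Let $\lambda>0$ be a Lebesgue number for $\cU$ in the sense of Lemma~\ref{Leb-Version1}, so that every open ball $B(x,\lambda)$ is contained in some element of $\cU$. I claim that $\overline\lambda:=\lambda$ serves as a Lebesgue number in the sense of Version~2. The key observation is simply that a set of small diameter is contained in a ball of radius $\lambda$ centered at any of its points.

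To carry this out, take an arbitrary $A\subseteq X$ with $\diam A<\lambda$. If $A=\emptyset$ it is contained in every element of $\cU$ (which is nonempty, being a cover of the nonempty space $X$), so I may assume $A\neq\emptyset$ and fix a point $a\in A$. For every $b\in A$ one has $d(a,b)\le\diam A<\lambda$, hence $b\in B(a,\lambda)$; therefore $A\subseteq B(a,\lambda)$. By the choice of $\lambda$ there is some $U\in\cU$ with $B(a,\lambda)\subseteq U$, and consequently $A\subseteq U$, as required. (The same computation run in reverse, using $\diam B(x,r)\le 2r$, shows that a Version~2 number of size $\overline\lambda$ yields a Version~1 number of size, say, $\overline\lambda/3$, so the two notions agree up to a multiplicative constant.)

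For a self-contained argument that does not invoke Version~1, I would instead use compactness directly. First extract a finite subcover $U_1,\dots,U_n$ of $X$ and define $f\colon X\to\R$ by $f(x)=\max_{1\le i\le n}\dist(x,X\setminus U_i)$, with the convention $\dist(x,\emptyset):=\diam X$ whenever some $U_i=X$. Each map $x\mapsto\dist(x,X\setminus U_i)$ is $1$-Lipschitz, so $f$ is continuous, and since the open sets $U_i$ cover $X$ we get $f(x)>0$ for all $x$. By compactness $f$ attains a minimum $\delta>0$, and I would set $\overline\lambda=\delta$: given $A$ with $\diam A<\delta$ and $a\in A$, some index $i$ satisfies $\dist(a,X\setminus U_i)\ge\delta>\diam A$, forcing every $b\in A$ into $U_i$. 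The main obstacle here is nothing deep but rather bookkeeping: I must keep the diameter inequality strict wherever points of $A$ need to land \emph{inside} an open ball or inside $U_i$, handle the empty set separately, and — in the self-contained version — justify that $\min f$ is strictly positive, which is precisely the step where compactness is indispensable and where the lemma fails for non-compact $X$.
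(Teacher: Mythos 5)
Your proposal is correct and takes essentially the same route as the paper: the paper treats Version~2 as equivalent to Version~1, noting that any $\lambda$ from Version~1 already serves as a $\overline\lambda$ for Version~2 and that conversely any $\lambda\in\left(0,\frac{\overline\lambda}{2}\right)$ works for Version~1 — which is exactly your main argument and your parenthetical — while your self-contained alternative reproduces the Kelley-style compactness proof that the paper sketches at the start of Section~2. The only minor wrinkle is your convention $\dist(x,\emptyset):=\diam X$, which degenerates when $X$ is a single point (giving $\delta=0$); the paper avoids this by assuming no member of $\cU$ equals $X$, and in any case that case is trivial and does not affect your primary argument via Version~1.
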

\begin{remark}\label{remark: a Lebesgue leq}
Note that in Version $2$, the strict inequality  $\diam A<\overline\lambda$ is often replaced by  $\diam A\leq\overline\lambda$ (see, for example, \cite{Ungar}, \cite{Coornaert}).
\end{remark}

It is easy to see that the statements from Version $1$ and Version $2$ above are equivalent: any $\lambda$ satisfying Version $1$ will appear among the numbers $\overline\lambda$ satisfying the requirement of Version $2$, and for a $\overline\lambda$ from Version $2$, any $\lambda \in \left(0,\frac{\overline\lambda}{2}\right)$ will satisfy the Version $1$ requirement.

\medskip

But our intention here is to discuss what could be referred to as \emph{the largest Lebesgue number} of a cover $\cU$ for a metric space $X$, which is a notion often used in asymptotic dimension theory.
However, note that calling it ``the largest'' may be a little misleading, since ``the largest'' is often associated with the word ``maximal''. And we shall see that this number can be defined using supremum, as well as a combination of $\inf$ and $\sup$ of certain sets of values (see Definitions \ref{def: Lebesgue BS}, \ref{def: L-Rad} and \ref{def: L-Diam} below), and there are cases when supremum exists, while the maximum does not (see Example \ref{ex-balls-1} below). So perhaps it would be more accurate to call it \emph{the superior Lebesgue number}, rather than the largest one, but let us just call it \emph{the Lebesgue number}.

In any case, we are going to discuss the notion of \emph{the Lebesgue number} for a cover $\cU$ of a (non-empty) metric space $X$, where the cover $\cU$ is not necessarily open, the space $X$ is not necessarily compact, and the number itself, marked by $L(\cU)$, can take the values from $\left[0, \infty\right)\cup\{\infty\}$. Moreover, we shall introduce a \emph{relative} version of the Lebesgue number, which works for a (non-empty) subset $A$ of $X$ and takes into account whether $A\subseteq \cup\cU$ or $A=\cup\cU$ (see Definition \ref{def: Lebesgue relative} below). Then we intend to show a result (Lemma \ref{adjusted-lemma12.2.3}, which is a correction of \cite[Lemma 3.4]{BuyaloLebedeva}, and also \cite[Lemma 12.2.3]{BuyaloSchroeder}) that illustrates the importance of having this relative version of the Lebesgue number for subsets.

\section{Definition(s) of the Lebesgue number}

As a motivation for introducing our general definition, let us recall that one way of proving Lemma \ref{Leb-Version1} (see, for example, the proof of Theorem 5.26 in \cite{Kelley}),
goes as follows. Take any open cover $\cU$ of a compact metric space $X$ (such that $\cU$ does not contain $X$ itself), reduce it to a finite subcover $\{ U_1, \ldots , U_n \}$, and then define a function $f:X\to \left[0,\infty\right)$ by $\displaystyle f(x):=\max \{ \dist(x, X\setminus U_i)\ | \ i=1, \ldots , n\}$, which is continuous on $X$. Then $f(X)$ is a compact subset of positive real numbers, so it has a positive minimum, which we can take as a Lebesgue number $\lambda$ we are seeking (i.e., every open ball $B(x, \lambda)$ is contained in some element of $\cU$).

Generalizing this approach for a metric space $X$ which need not be compact, and a cover $\cU$ for $X$ which need not be open, we can replace the function $f$ written above by the function $L(\cU, \cdot): X \to \left[0,\infty\right) \cup \{\infty\}$ given by $\displaystyle L(\cU, x):= \sup_{ U\in\cU} \dist(x, X\setminus U)$, which is $\geq 0$ (it can be $=0$, depending on the choice of $\cU$), and then we can define the Lebesgue number of $\cU$ as the infimum of all $L(\cU, x)$ taken over all $x\in X$. That is, we have the following definition:

\begin{definition}[\cite{BuyaloSchroeder}, \cite{BuyaloLebedeva}]\label{def: Lebesgue BS}
Let $\cU$ be a cover of a non-empty metric space $X$, i.e., $X=\cup\cU$. Then we define
\begin{eqnarray}
L(\cU):=\inf_{x\in X}L(\cU,x), \ \text{ where } L(\cU,x):=\sup_{ U\in\cU} \dist(x, X\setminus U).
\end{eqnarray}
This number $L(\cU)$ is called \emph{the Lebesgue number of $\cU$}.
\end{definition}

In \cite{BuyaloSchroeder}, this definition is initially given for \emph{open} covers $\cU$ of $X$, but it is used on any covers of metric spaces, especially in the context of asymptotic dimension ($\asdim$). Let us state one of the equivalent definitions for $\asdim$ here.

\begin{definition}(\cite{BuyaloSchroeder}, \cite{BellDran})\label{def: asdim}
 For a metric space $X\neq\emptyset$, $\asdim X$ is the minimal $n\in \N\cup\{0\}$ such that for every $d>0$, there is a uniformly bounded cover $\cU$ of $X$ with multiplicity $\leq n+1$ and the Lebesgue number $L(\cU)>d$.
 \end{definition}
Since the relevant covers $\cU$ from Definition \ref{def: asdim} will have large $L(\cU)$, it will not matter whether these covers are open or not. Also, the requirement for covers to be uniformly bounded (i.e., the diameters of all sets in the cover to be bounded above by the same positive number) will mean that if $X$ is unbounded, the Lebesgue number will not be $\infty$.
In general,  if we want the Lebesgue number of a cover $\cU$ of $X$ to be finite, we need to assume that no member of $\cU$ equals $X$, since otherwise $\dist (x,X\setminus X)= \dist(x, \emptyset)=\infty$, so $L(\cU)=\infty$.

On the other hand, Definition \ref{def: Lebesgue BS} is allowing $L(\cU)$ to be $0$, however undesirable this may be, as this can happen for a cover $\cU$ of a non-compact metric space $X$, whether $\cU$ is open or not. Clearly, if we take $\cU=\{\left[n, n+1\right] \ | \ n \in \Z\}$ as a cover of $\R$, then $L(\cU)=0$. Or:

\begin{example}
If $X=\left(0,1\right)$ with the restriction of the standard Euclidean metric, and $\cU=\{ \left(\frac{1}{n}, 1\right) \ | \ n\in\N_{\geq 2} \}$, then for any $x\in \left(0,1\right)$, we have $L(\cU, x)=\sup \{ x -\frac{1}{n} \ | \ n\in\N_{\geq 2} \text{ and } \frac{1}{n}<x \}=x$, so $L(\cU)= \inf \{x \ | \ x\in (0,1) \}=0$.
\end{example}

\begin{remark}
There is a notion of a \emph{Lebesgue space}, which is a metric space having the property that every open cover of it has a (positive) Lebesgue number, taking the definition of a Lebesgue number from either Lemma \ref{Leb-Version1} or Lemma \ref{Leb-Version2} (see, for example, \cite{GiustoMarcone}). But we are not discussing Lebesgue spaces in this paper.
\end{remark}

\medskip

Considering how we arrived at Definiton \ref{def: Lebesgue BS} above, it is not surprising that the following is true:

\begin{lemma}\label{BS-def-L implies radius def}
Let $X$ be a metric space, and let $\cU$ be a cover of $X$ with the Lebesgue number $L(\cU)>0$. Then for each $r \in \left( 0, L(\cU)\right)$  and any $x\in X$, the open ball $B(x,r)$ has to be contained in some element of $\cU$, that is, the family of open balls $\{ B(x,r)\ | \ x\in X\}$ is a refinement of $\cU$.
\end{lemma}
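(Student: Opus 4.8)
The plan is to fix $r \in (0, L(\cU))$ and an arbitrary point $x \in X$, and then to produce a single element $U \in \cU$ that swallows the ball $B(x,r)$. The whole argument rests on peeling back the two layers in the definition of $L(\cU)$: first the infimum over points, then the supremum over cover elements.

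First I would observe that, by the definition $L(\cU) = \inf_{y \in X} L(\cU, y)$, the chosen radius satisfies $r < L(\cU) \le L(\cU, x)$, so $r$ lies strictly below $L(\cU, x) = \sup_{U \in \cU} \dist(x, X \setminus U)$. The defining property of the supremum then guarantees the existence of at least one $U \in \cU$ with $\dist(x, X \setminus U) > r$; this is the crucial selection step, and it is exactly here that the strict inequality $r < L(\cU)$ (rather than $r \le L(\cU)$) is used: were every $\dist(x, X\setminus U)$ at most $r$, the supremum $L(\cU,x)$ would be $\le r$, a contradiction.

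With such a $U$ in hand, I would verify the inclusion $B(x,r) \subseteq U$ by a short contrapositive. Any point $y \in B(x,r)$ lying outside $U$ would belong to $X \setminus U$ and hence satisfy $\dist(x, X \setminus U) \le d(x,y) < r$, contradicting $\dist(x, X \setminus U) > r$. Thus every $y \in B(x,r)$ lies in $U$, giving $B(x,r) \subseteq U$. Since $x$ was arbitrary, the family $\{ B(x,r)\ | \ x\in X\}$ refines $\cU$, which is the assertion.

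I do not expect a genuine obstacle, as the statement is essentially a direct unwinding of Definition \ref{def: Lebesgue BS}. The only point deserving a word of care is the degenerate case $U = X$, where $X \setminus U = \emptyset$ and $\dist(x, \emptyset) = \infty > r$; there the inclusion $B(x,r) \subseteq X = U$ is automatic, so the selection and containment steps go through unchanged.
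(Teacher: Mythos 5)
Your proof is correct and follows essentially the same route as the paper: unwind the infimum to get $r < L(\cU,x)$, use the supremum property to select $U \in \cU$ with $\dist(x, X\setminus U) > r$, and conclude $B(x,r) \subseteq U$. Your contrapositive phrasing of the final inclusion is just a rewording of the paper's observation that $B(x,D) \subseteq U_r$ for $D = \dist(x, X\setminus U_r)$, and your remark on the case $U = X$ is consistent with the paper's convention $\dist(x,\emptyset) = \infty$.
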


\begin{proof}
Since $L(\cU)=\inf_{x\in X} L(\cU,x)$, from $r \in \left( 0, L(\cU)\right)$ it follows that $0< r< L(\cU, x)$, for all $x\in X$. Let us fix a random $x\in X$. From $r<L(\cU,x)=\sup_{U\in\cU} \dist (x, X\setminus U)$ it follows that there is a $U_{r} \in \cU$ such that $r < \dist (x, X\setminus U_{r})= \inf_{z\in X\setminus U_{r}} d(x,z)$. If $D:=\dist (x, X\setminus U_{r})$, then for each $z\in X\setminus U_{r}$ we have that $d(x,z)\geq D$. Note that $z\in X\setminus U_{r}$ implying $d(x,z)\geq D$ is equivalent to saying that, whenever $y\in X$ is such that $d(x,y)<D$, it follows that $y\in U_{r}$, i.e., $B(x,D)\subseteq U_{r}$. Therefore $B(x,r)\subset B(x, D)\subseteq U_{r}$.
\end{proof}

Let us point out that the radius $r$ from Lemma \ref{BS-def-L implies radius def} cannot be equal to $L(\cU)$ in general, as is shown in the next example.

\begin{example}\label{ex-balls-1}
For $X=\R^2$ with the standard Euclidean metric, and its cover by open balls $\cU=\{ B(x, 1 - \frac{1}{n}) \ | \ x\in \R^2, n \in \N_{\geq 2} \}$, we have that for all $x\in X$, $L(\cU, x)=\sup \{1-\frac{1}{n} \ | \ n\in\N_{\geq 2} \}=1$, so $L(\cU)=1$, but no open ball with radius $1$ is contained in any element of $\cU$.
\end{example}

Also note that $B(x,0)=\{y \in X \ | \ d(x,y)<0 \}=\emptyset$, so we could allow $r=0$ in Lemma \ref{BS-def-L implies radius def}, though it would not be informative.

\medskip

Regarding Lemma \ref{BS-def-L implies radius def}, we can also consider whether, for a metric space $X$ and its cover $\cU$, the following number is equal to our $L(\cU)$ from Definition~\ref{def: Lebesgue BS}:
\begin{definition}\label{def: L-Rad}
Let $\cU$ be a cover of a non-empty metric space $X$. Then we define
\begin{eqnarray}\label{L radius}
L_{Rad}(\cU) &:=& \inf_{x\in X}L_{Rad}(\cU,x), \ \text{ where} \nonumber \\
 L_{Rad}(\cU,x) &:=&\sup \ \{ r>0 \ | \ B(x,r)\subseteq U, \text{ for some } U\in \cU \}.
\end{eqnarray}
\end{definition}
First note that the definition of $L_{Rad}(\cU)$ only makes sense for covers $\cU$ such that, for every $x\in X$, there is some $r_x>0$ such that $B(x, r_x)$ is contained in some $U\in \cU$. So we could require any cover $\cU$ we work with to have this property, or simply ask for a cover $\cU$ in Definition \ref{def: L-Rad} to be open.
Let us now state the following lemma, comparing $L(\cU)$ and $L_{Rad}(\cU)$, for open covers:

\begin{lemma}\label{lemma: L vs L-Rad}
Let $X$ be a metric space and let $\cU$ be an open cover of $X$. Then $L(\cU)=L_{Rad}(\cU)$.
\end{lemma}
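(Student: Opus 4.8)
The plan is to reduce the equality of the two infima to a pointwise identity: I will show that for every fixed $x\in X$ one has $L(\cU,x)=L_{Rad}(\cU,x)$, and then take the infimum over $x\in X$ on both sides. Before doing so I would observe that the openness of $\cU$ is exactly what guarantees that $L_{Rad}(\cU,x)$ is well defined: each $x$ lies in some $U\in\cU$, and since $U$ is open there is an $r_x>0$ with $B(x,r_x)\subseteq U$, so the set $\{r>0 \mid B(x,r)\subseteq U \text{ for some } U\in\cU\}$ is non-empty and in fact $L_{Rad}(\cU,x)>0$.

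For the inequality $L(\cU,x)\leq L_{Rad}(\cU,x)$, I would reuse the computation from the proof of Lemma \ref{BS-def-L implies radius def}: for any $U\in\cU$, setting $D:=\dist(x,X\setminus U)$, one has $B(x,D)\subseteq U$. Hence, when $0<D<\infty$, the number $D$ belongs to the set whose supremum defines $L_{Rad}(\cU,x)$, so $\dist(x,X\setminus U)\leq L_{Rad}(\cU,x)$; the cases $D=0$ and $D=\infty$ (the latter occurring only when $U=X$, in which case $B(x,r)\subseteq U$ for every $r$, so $L_{Rad}(\cU,x)=\infty$) are immediate. Taking the supremum over $U\in\cU$ then yields $L(\cU,x)\leq L_{Rad}(\cU,x)$.

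For the reverse inequality $L_{Rad}(\cU,x)\leq L(\cU,x)$, I would start from any $r>0$ with $B(x,r)\subseteq U$ for some $U\in\cU$. The inclusion $B(x,r)\subseteq U$ is equivalent to saying that every $z\in X\setminus U$ satisfies $d(x,z)\geq r$, so that $\dist(x,X\setminus U)=\inf_{z\in X\setminus U}d(x,z)\geq r$. Thus $r\leq \dist(x,X\setminus U)\leq L(\cU,x)$, and taking the supremum over all such $r$ gives $L_{Rad}(\cU,x)\leq L(\cU,x)$. Combining the two inequalities produces the pointwise identity $L(\cU,x)=L_{Rad}(\cU,x)$, and passing to the infimum over $x\in X$ finishes the proof.

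I do not expect a serious obstacle here; the content is essentially the two-sided translation between ``distance to the complement'' and ``radius of an inscribed ball,'' both directions of which are already implicit in Lemma \ref{BS-def-L implies radius def}. The only points requiring a little care are the bookkeeping of the degenerate values $0$ and $\infty$ in the suprema, and making explicit that openness of $\cU$ is used solely to ensure that $L_{Rad}(\cU,x)$ is defined, and not in the equality itself.
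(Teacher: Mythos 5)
Your proof is correct and follows essentially the same route as the paper's: establish the pointwise identity $L(\cU,x)=L_{Rad}(\cU,x)$ via the two-way translation between $\dist(x,X\setminus U)$ and inscribed-ball radii (reusing the computation of Lemma \ref{BS-def-L implies radius def}), then take the infimum over $x$. The only cosmetic difference is that in the direction $L(\cU,x)\leq L_{Rad}(\cU,x)$ you bound $\dist(x,X\setminus U)$ directly for each $U$ (with explicit treatment of the degenerate values $0$ and $\infty$), whereas the paper works with radii $r<L(\cU,x)$; both rest on the same observation $B(x,\dist(x,X\setminus U))\subseteq U$.
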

\begin{proof}
First note that, since $\cU$ is an open cover for $X$, then for each $x\in X$ we have both $L(\cU,x)>0$ and $L_{Rad}(\cU,x)>0$. This is because for $x\in X$ there is an open set $U\in \cU$ containing it, so there is an $r>0$ such that $B(x,r)\subseteq U$, which is equivalent to $\dist(x, X\setminus U)\geq r$.
Let us fix a random $x\in X$.

If we take any $r\in \left(0, L(\cU,x)\right)$, using the same argument as in the proof of Lemma \ref{BS-def-L implies radius def} we get that $B(x,r)\subseteq U_r$, for some $U_r\in \cU$, which means that all elements of $\left(0, L(\cU,x)\right)$ belong to the set $\cR_x:=\{ r>0 \ | \ B(x,r)\subseteq U, \text{ for some } U\in \cU \}$, i.e., $L(\cU,x)=\sup \left(0, L(\cU,x)\right)\leq \sup \cR_x = L_{Rad}(\cU,x)$.

On the other hand, if we take any $R\in \cR_x$, then there exists $U_R\in \cU$ such that $B(x, R)\subseteq U_{R}$, which is equivalent to saying that $\dist (x, X\setminus U_{R})\geq R$, so $L(\cU,x) =\sup_{U\in \cU} \dist (x, X\setminus U) \geq R$. Thus
$L(\cU,x) \geq \sup \cR_x = L_{Rad}(\cU,x)$.

Therefore $L(\cU,x)=L_{Rad}(\cU,x)$, for every $x\in X$, and the claim follows.
\end{proof}

Since Definition~\ref{def: Lebesgue BS} gives us $L(\cU)$ which is not as sensitive to the choice of covers as $L_{Rad}(\cU)$ is, we shall keep our focus on $L(\cU)$.

\bigskip

Another approach to introducing the Lebesgue number of a cover for a metric space would be to give a definition suggested by Lemma \ref{Leb-Version2}, as it was done by G.~Bell and A.~Dranishnikov in \cite{BellDran}. We will call their version $L_{Diam}(\cU)$, and define it here (with a slight adjustment in the wording, with respect to the originaly stated definition):
\begin{definition}\label{def: L-Diam} Let $\cU$ be a cover of a non-empty metric space $X$. Then we define
\begin{eqnarray}
L_{Diam}(\cU):= \sup \{D\geq 0\ | \ \forall A\subseteq X \text{\small{ with }} \diam A \leq D, \ \exists U\in \cU \text{ s.t. } A\subseteq U\}.
\end{eqnarray}
\end{definition}

For the correspondence between $L_{Diam}(\cU)$ and $L(\cU)$, we can prove the following:
\begin{lemma}\label{lemma: L vs L-Diam}
Let $X$ be a metric space and let $\cU$ be a cover of $X$. Then
\[ L(\cU)\leq L_{Diam}(\cU)\leq 2\cdot L(\cU).
\]
\end{lemma}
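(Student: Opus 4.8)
The statement splits into two independent inequalities, and I would treat them separately. For the lower bound $L(\cU)\le L_{Diam}(\cU)$, the plan is to show that every $D$ with $0\le D<L(\cU)$ lies in the set defining $L_{Diam}(\cU)$; taking the supremum over such $D$ then yields the claim (the case $L(\cU)=0$ being trivial, since $L_{Diam}(\cU)\ge 0$ always, as singletons have diameter $0$). So I would fix such a $D$ and a non-empty $A\subseteq X$ with $\diam A\le D$ (the empty set is contained in any member of the non-empty family $\cU$). Choosing any $x\in A$, we have $D<L(\cU)\le L(\cU,x)=\sup_{U\in\cU}\dist(x,X\setminus U)$, so some $U\in\cU$ satisfies $\dist(x,X\setminus U)>D$. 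Then the very computation used in the proof of Lemma \ref{BS-def-L implies radius def} shows $A\subseteq U$: every $y\in A$ has $d(x,y)\le\diam A\le D<\dist(x,X\setminus U)$, forcing $y\in U$. Hence $D$ belongs to the defining set of $L_{Diam}(\cU)$, as desired.

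For the upper bound $L_{Diam}(\cU)\le 2\,L(\cU)$, I would first record the structural observation that the set $S:=\{D\ge 0 : \forall A,\ \diam A\le D\Rightarrow A\subseteq U \text{ for some }U\in\cU\}$ is downward closed, i.e.\ if $D\in S$ and $0\le D'\le D$ then $D'\in S$. Consequently, for every $D<L_{Diam}(\cU)=\sup S$ we in fact have $D\in S$, so sets of diameter $\le D$ are genuinely contained in members of $\cU$. The factor $2$ then enters through the elementary bound $\diam B(x,r)\le 2r$. Concretely, I would fix $x\in X$ and take any $r$ with $2r<L_{Diam}(\cU)$; then $B(x,r)$ has diameter $\le 2r\in S$, so $B(x,r)\subseteq U$ for some $U\in\cU$, which gives $\dist(x,X\setminus U)\ge r$ and hence $L(\cU,x)\ge r$. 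Letting $r$ increase to $\tfrac12 L_{Diam}(\cU)$ yields $L(\cU,x)\ge\tfrac12 L_{Diam}(\cU)$ for every $x$, and taking the infimum over $x$ gives $L(\cU)\ge\tfrac12 L_{Diam}(\cU)$.

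Neither direction presents a deep difficulty; both are essentially a careful unwinding of the two definitions, with the radius-versus-diameter discrepancy accounting for the factor $2$. The step I would be most careful about is the passage from ``$D<\sup S$'' to ``$D\in S$'' in the second inequality, since $L_{Diam}(\cU)$ is defined as a supremum and need not itself belong to $S$; the downward-closedness of $S$ is exactly what legitimizes this move. I would also keep an eye on the degenerate values throughout: the cases $L(\cU)=0$, $L_{Diam}(\cU)=0$, and the possibility that either quantity equals $\infty$ (where $\dist(x,X\setminus U)=\infty$ when $U=X$) should each be checked to confirm the inequalities hold vacuously or by the same limiting argument.
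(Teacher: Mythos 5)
Your proof is correct and follows essentially the same route as the paper's: both inequalities are obtained by unwinding the two definitions, with the factor $2$ entering through the comparison of a ball's radius with its diameter. The only differences are cosmetic---you choose $U$ with $\dist(x,X\setminus U)>D$ directly rather than invoking Lemma \ref{BS-def-L implies radius def} with an $\varepsilon$-enlarged closed ball, and you make explicit the downward-closedness of the defining set and the degenerate cases $L(\cU)=0$, $L_{Diam}(\cU)=0$, which the paper handles implicitly or by a brief contradiction argument at the end.
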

\begin{proof}
Let $\cU$ be a cover of $X$ ($\cU$ need not be open).
To begin with, suppose that $L(\cU)>0$. Then by Lemma \ref{BS-def-L implies radius def}, for any $\rho \in \left(0, L(\cU)\right)$ and any $x\in X$, the open ball $B(x,\rho)$ is contained in some $U_{\rho,x}\in \cU$. Let us fix an $r\in \left(0, L(\cU)\right)$. If we take any $A\subseteq X$ with $\diam A\leq r$,  then for any $x\in A$ we have $d(x,y)\leq r$, $\forall y\in A$, i.e., $A\subseteq \oB(x,r)$ (where $\oB(x,r)$ is the closed ball centered at $x$ with radius $r$). But note that, for our $r< L(\cU)$ there exists an $\varepsilon>0$ such that $r+\varepsilon < L(\cU)$, so we have $A\subseteq \oB(x,r)\subseteq B(x, r+\varepsilon)\subseteq U_{r+\varepsilon,x}$, for some $U_{r+\varepsilon,x}\in \cU$. Therefore $r\in \cD$, where
\[\cD :=\{D\geq 0\ | \ \forall A\subseteq X \text{\small{ with }} \diam A \leq D,  \ \exists U\in \cU \text{ s.t. } A\subseteq U\}.
\]
Thus we have $\left(0, L(\cU)\right) \subseteq \left( 0, \sup \cD\right)$, that is, $L(\cU)\leq L_{Diam}(\cU)$.

On the other hand, suppose that $L_{Diam}(\cU)=\sup \cD >0$, and take any $D\in \left( 0, L_{Diam}(\cU)\right)$. For a random $x\in X$, from $\diam \oB(x, \frac{D}{2})\leq D$ we get that there exists a $U_{D,x} \in \cU$ such that $\oB (x, \frac{D}{2})\subseteq U_{D,x}$, so $X\setminus \oB (x, \frac{D}{2}) \supseteq X\setminus U_{D,x}$, which implies $\dist (x, X\setminus U_{D,x})\geq \dist (x, X\setminus \oB (x, \frac{D}{2})) \geq\frac{D}{2}$. Therefore $L(\cU,x)=\sup_{U\in \cU} \dist(x, X\setminus U)\geq\frac{D}{2}$, for all $x\in X$, which means $L(\cU)=\inf_{x\in X} L(\cU,x)\geq \frac{D}{2}$, and the last inequality is true for all $D\in \left( 0, L_{Diam}(\cU)\right)$, giving us $L(\cU)\geq \frac{L_{Diam}(\cU)}{2}$.

Finally, if one of $L(\cU)$ or $L_{Diam}(\cU)$ is equal to $0$, so must the other be, since assuming the contrary would give us a contradiction with one of the inequalities above.
\end{proof}

\begin{remark}
From Lemma \ref{lemma: L vs L-Rad} and Lemma \ref{lemma: L vs L-Diam} it follows that, for an open cover $\cU$ of $X$, we have 
\[ L_{Rad}(\cU)\leq L_{Diam}(\cU)\leq 2\cdot L_{Rad}(\cU).
\]
\end{remark}

Let us also note here that Lemma \ref{lemma: L vs L-Diam} helps us understand that, as far as Definition \ref{def: asdim} for asymptotic dimension is concerned, using $L(\cU)$ (as in \cite{BuyaloSchroeder}) or $L_{Diam}(\cU)$ (as in \cite{BellDran}) as definition of the Lebesgue number of $\cU$ does not make a difference for $\asdim X$.

\section{The relative Lebesgue number for a covering family of a subset}

Before we introduce the Lebesgue number with respect to subsets (i.e., subspaces) of a metric space, we need some additional terminology, which was also used in \cite{CHT}.

\begin{definition}
If $X\neq\emptyset$ is a set, and $A$ is any non-empty subset of $X$, we say that a family $\cU$ of subsets of $X$ is a \emph{covering family of $A$ in $X$} if $A \subseteq \cup \cU$.
\end{definition}

Note that $\cU$ being a covering family of $A$ in $X$ is equivalent to $\cU|_A$ being a cover of $A$, where 
$\cU|_A=\{U\cap A\ | \ U\in\cU\}$, i.e., $A=\cup \cU|_A$. Clearly, any cover of $X$ is also a covering family of any subset of $X$ (including $X$ itself). Also note that if we require a covering family $\cU$ of $A$ to be an open covering family, this will mean that all $U\in \cU$ are open in the ambient space $X$.
 The word \emph{cover} is often used in literature for both $A=\cup\cU$ and $A\subseteq \cup\cU$, but we find making the distinction between a cover and a covering family helpful.
 We are now ready to introduce the \emph{relative} version of the Lebesgue number for covering families of subsets of $X$.
 
\begin{definition}\label{def: Lebesgue relative}
 If $A\neq\emptyset$ is a subset (i.e., a subspace) of a metric space $X$, then \emph{the Lebesgue number of a covering family $\cU$ of $A$} in the ambient space $X$ is defined as 
\begin{eqnarray}
L_X(\cU, A):=\inf_{x\in A}L_X(\cU,x), \ \text{ where } L_X(\cU,x):=\sup_{ U\in\cU} \dist(x, X\setminus U).
\end{eqnarray}
\end{definition}

Note that $L_X(\cU,x)$ from Definition \ref{def: Lebesgue relative} is the same as $L(\cU,x)$ from Definition \ref{def: Lebesgue BS} (though our $\cU$ here need not cover the entire $X$), but we wanted to emphasize the ambient space by adding the index. Also note that
 for $A=X$ we have
\[ L_X(\cU, X) = \inf_{x\in X}L_X(\cU,x) = L(\cU), \]
that is, in the case $A=X$ we recover the Definition \ref{def: Lebesgue BS} of the Lebesgue number of a cover $\cU$ of $X$.
Here is a lemma comparing the notion of $L(\cU)$ with various appropriate relative versions.

\begin{lemma}\label{lemma: Lebesgue-restriction-of-family}\label{Lebesgue-subspace}
When $\cU$ is a cover of a metric space $X$ and $A\subseteq B\subseteq X$, then
\begin{eqnarray}\label{Lebesgue-restriction-of-family}
L(\cU) = L_X(\cU, X)\leq L_X (\cU,A) \leq L_B(\cU|_B, A)\leq L_A (\cU|_A, A). 
\end{eqnarray}
The middle and the rightmost inequalities are also true for a covering family $\cU$ of $A$ in $X$.
\end{lemma}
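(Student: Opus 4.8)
The plan is to treat the chain one link at a time, observing that all three inequalities rest on just two elementary principles: monotonicity of the infimum under shrinking the index set, and the fact that passing to a smaller ambient space can only enlarge the distance from a point to the complement of a cover element.

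First, for the equality $L(\cU)=L_X(\cU,X)$ I would simply invoke the remark made right after Definition \ref{def: Lebesgue relative}, namely that the inner function $L_X(\cU,x)$ coincides with $L(\cU,x)$ from Definition \ref{def: Lebesgue BS}, so that the infimum over all of $X$ recovers $L(\cU)$. For the leftmost inequality $L_X(\cU,X)\leq L_X(\cU,A)$, the function $x\mapsto L_X(\cU,x)$ appearing inside both infima is literally the same, while the index set shrinks from $X$ to $A\subseteq X$; since an infimum cannot decrease when its index set shrinks, the inequality is immediate.

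For the two remaining inequalities I would prove a single pointwise estimate and then take the infimum over $x\in A$. Fix $x\in A$ and $U\in\cU$. The key set identity is that, for any subspace $Y$, one has $Y\setminus(U\cap Y)=Y\cap(X\setminus U)\subseteq X\setminus U$, so restricting the ambient space from $X$ to $Y$ replaces $X\setminus U$ by a \emph{smaller} set. Taking the infimum of $d(x,\cdot)$ over a smaller set can only increase it, whence $\dist(x,X\setminus U)\leq \dist(x,Y\setminus(U\cap Y))$ computed in $Y$, with the usual convention that the distance to the empty set is $\infty$ (the case $Y\subseteq U$). Taking the supremum over $U\in\cU$ and specializing $Y=B$ gives $L_X(\cU,x)\leq L_B(\cU|_B,x)$; applying the same principle to the inclusion $A\subseteq B$ with the cover $\cU|_B$ of $B$, and using $(\cU|_B)|_A=\cU|_A$ (valid since $A\subseteq B$), gives $L_B(\cU|_B,x)\leq L_A(\cU|_A,x)$. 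The infimum over $x\in A$ then yields the middle and rightmost inequalities.

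For the closing remark I would point out that the pointwise estimate never uses $\cup\cU=X$: it refers only to the point $x\in A$ and to the individual members $U\in\cU$. Hence the middle and rightmost inequalities hold verbatim whenever $\cU$ is merely a covering family of $A$ in $X$, so that the quantities $L_X(\cU,A)$, $L_B(\cU|_B,A)$ and $L_A(\cU|_A,A)$ are defined; by contrast the leftmost inequality genuinely needs $\cU$ to cover $X$ in order for $L_X(\cU,X)=L(\cU)$ to be meaningful. The only real bookkeeping — and the main, if mild, obstacle — is keeping the three complements $X\setminus U$, $B\setminus(U\cap B)$ and $A\setminus(U\cap A)$ straight and checking that the empty-complement case (distance $\infty$) is consistent with, rather than a counterexample to, the asserted inequalities.
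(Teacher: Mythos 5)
Your proposal is correct and follows essentially the same route as the paper's proof: the leftmost equality and inequality are immediate from the definitions, and the middle and rightmost inequalities come from the pointwise comparison $\dist(x, A\setminus U)\geq \dist(x, B\setminus U)\geq \dist(x, X\setminus U)$ (your $Y\setminus(U\cap Y)\subseteq X\setminus U$ identity is the same observation), followed by $\sup_{U\in\cU}$ and then $\inf_{x\in A}$, with the same remark that this pointwise argument never uses $\cup\cU=X$. Your explicit handling of the empty-complement case and of $(\cU|_B)|_A=\cU|_A$ are minor extra details the paper leaves implicit.
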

\begin{proof} $L(\cU) = L_X(\cU, X)\leq L_X (\cU,A)$ is immediate from the definitions. For the middle and the rightmost inequality (whether $\cU$ is a cover of $X$, or a covering family of $A$ in $X$), note that for $U \in \cU$ we have
$A\setminus U\subseteq B\setminus U \subseteq X\setminus U$, and therefore $\dist(x, A\setminus U)\geq \dist (x, B\setminus U)\geq \dist (x, X\setminus U)$ for all $x\in A$, $U\in\cU$. Passing to the $\sup_{U\in\cU}$ yields $L_A(\cU|_A,x)\geq L_B(\cU|_B,x)\geq L_X(\cU,x)$ for all $x\in A$, so the middle and the rightmost inequalities follow after applying $\inf_{x\in X}$.
\end{proof}

It is easy to show that in the situation of Lemma \ref{lemma: Lebesgue-restriction-of-family}, the inequalities can be strict. Let us see an example of this for the middle and the rightmost inequalities, for a covering family of a subset of $X$.

\begin{example}
Let $X=\R^3$ with Euclidean metric, $A=\left[ 0,1\right]^2\times \{0\}$, $B=\R^2\times \{0\}$, and take \\
$\cU=\left\{U_1=\left(-\frac{1}{4}, \frac{7}{8}\right)\times \left(-\frac{1}{4}, \frac{5}{4}\right) \times \left(-\frac{1}{8}, \frac{1}{8}\right),
 U_2=\left(\frac{1}{8}, \frac{5}{4}\right)\times \left(-\frac{1}{4}, \frac{5}{4}\right) \times \left(-\frac{1}{8}, \frac{1}{8}\right) \right\}$ to be an open covering family of $A$ in $X$.
Then $L_A(\cU|_A, A)=\inf_{a\in A} \sup_{U\in\cU} \dist (a, A\setminus U) =\frac{3}{8}$,
$L_B(\cU|_B, A)=\inf_{a\in A} \sup_{U\in\cU} \dist (a, B\setminus U) =\frac{1}{4}$, 
while $L_X(\cU, A)=\inf_{a\in A} \sup_{U\in\cU} \dist (a, X\setminus U) =\frac{1}{8}$.
\end{example}

\bigskip

Now let us recall the notion of \emph{mesh} for a family of subsets in a metric space.

\begin{definition} If $X$ is a metric space and $\cU$ is any family of subsets of $X$ (in particular, a cover of $X$), then we define the \emph{mesh} of $\cU$ by $\mesh\cU:=\sup\{\diam U \mid U\in\cU\}$. 
\end{definition}
Clearly, if $A\subseteq X$ and $\cU$ is a cover of $X$, we have 
\begin{equation}\label{eq: mesh restr}
\mesh \cU\geq \mesh (\cU|_A).
\end{equation}
Also note that if $\cU'$ is a subcover of the cover $\cU$ of $X$, then it is easy to see that
\[
L(\cU') \leq L(\cU) \quad \text{and}\quad \mesh\cU' \leq \mesh \cU.
\]

Concerning the relation between the Lebesgue number and the mesh for the same cover, we observe that while the Lebesgue number (from Def.~\ref{def: Lebesgue BS}) is well-behaved with respect to subspaces in the sense of Lemma \ref{lemma: Lebesgue-restriction-of-family}, it is not bounded above by mesh, as is shown by the following:

\begin{example} \label{example: discrete} Let $X$ be a finite set with discrete metric, i.e., the metric is given by $d(x,y) = 1$ for $x \neq y$. Let $\cU$ be the open cover of $X$ by singletons. Then $\mesh \cU = 0$ and $L(\cU) = 1$. Thus in general the Lebesgue number of a cover is not bounded above by the mesh of that cover, and this issue cannot be resolved by passing to a subcover (since the singleton cover does not contain any proper subcovers). 
\end{example}

On the other hand, we leave it to the reader to check that, if $\cU$ is an open cover for a \emph{connected} metric space (such that no element of $\cU$ is equal to entire $X$), then for every $x\in X$, we have  $L(\cU, x)\leq\mesh \cU$, so here the Lebesgue number $L(\cU)$ is bounded above by $\mesh \cU$.
But this is not inherited by all subspaces, since we can certainly choose a subspace of a connected space $X$ that looks like the space described in Example \ref{example: discrete}.
 That is, even if we start from a cover $\cU$ of $X$ such that $L(\cU) \leq \mesh\cU$,  it may still happen for a subspace $B \subsetneq X$ that
$ L(\cU|_B) > {\rm mesh}(\cU|_B)$. So even if a cover $\cU$ of a space $X$ has $L(\cU)$ bounded by $\mesh \cU$, we inevitably run into problems when we want to control the Lebesgue number of restrictions of a given cover to subspaces, in terms of the corresponding meshes.

\medskip

Thus we should mention here that there is an alternative definition of the Lebesgue number which has a built-in control by mesh (see, for example, \cite{Buyalo}, with notation adjusted to ours):
\begin{definition} \label{def: Lebesgue w mesh} Let $X$ be a metric space. For an (open) cover $\cU$ of $X$ we define 
\begin{eqnarray}
\widetilde L(\cU,x):=\min \{ L(\cU, x), \mesh (\cU,x) \}, \text{ where } \mesh(\cU, x) :=\sup \{\diam  U\ | \ x\in U\in \cU\},
\end{eqnarray}
and $L(\cU,x)$ is the same as in Definition \ref{def: Lebesgue BS}. Then we define \emph{the Lebesgue number of $\cU$ of the second kind} by
\[
\widetilde L(\cU):=\inf_{x\in X}\widetilde L(\cU,x).
\]
\end{definition}

It is clear that $\mesh\cU =\sup_{x\in X} \mesh (\cU,x)$, and that from $\widetilde L(\cU,x)\leq \mesh (\cU,x)$ we get $\widetilde L(\cU)\leq \mesh \cU$.

As in Definition \ref{def: Lebesgue relative}, for any $A\subseteq X$ we could also introduce the relative version $\widetilde{L}_X(\cU, A)=\inf_{x\in A}\widetilde L(\cU,x)$ of Definition 
\ref{def: Lebesgue w mesh}, but because of the built-in mesh control, the Lebesgue number of the second kind fails to satisfy the inequality~\eqref{Lebesgue-restriction-of-family} from Lemma~\ref {lemma: Lebesgue-restriction-of-family} (the inequality\eqref{eq: mesh restr} gets in the way). Since the inequality \eqref{Lebesgue-restriction-of-family} is
often useful in proofs of theorems concerning a\-sym\-ptotic dimension, as well as other types of dimension,
 the Lebesgue number of the second kind will not be discussed further in this paper.

\section{An application of the relative Lebesgue number}

In this section, we would like to show the advantage of introducing the notion of the relative Lebesgue number, which comes with the precise notation for the subset, its covering family and the ambient space. We will show that, using the precise notation, 
we will be able to give a more precise, corrected formulation and the proof of 
Lemma 3.4 from \cite{BuyaloLebedeva}, which is also, in a slightly different form, Lemma 12.2.3 in \cite{BuyaloSchroeder}. The original statement of this lemma reads as follows (considering $Z$ and $Z'$ are metric spaces):

\begin{lemma}
\cite[Lemma 3.4]{BuyaloLebedeva}\label{lemma: BL wrong}
Let $h:Z\to Z'$ be a $\lambda$--quasi-homothetic map with coefficient $R$. Let $V\subseteq Z$, let $\widetilde\cU$ be an open covering\footnote{not mentioned in which ambient space} of $h(V)$, and let $\cU=h^{-1}(\widetilde\cU)$. Then: 
\begin{enumerate}
\item[(i)] $\frac{1}{\lambda}R\cdot \mesh\cU\leq \mesh\widetilde\cU \leq \lambda R\cdot \mesh\cU$;
\item[(ii)] $\frac{1}{\lambda} R \cdot L(\cU)\leq L(\widetilde\cU)\leq \lambda R\cdot L(\cU)$, where $L(\cU)$ is the Lebesgue number of $\cU$ as a covering of $V$.
\end{enumerate}
\end{lemma}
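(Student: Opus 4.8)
The plan is to first unpack the hypothesis that $h\colon Z\to Z'$ is $\lambda$--quasi-homothetic with coefficient $R$, which I take to mean the two-sided distortion bound $\frac{R}{\lambda}\,d_Z(z_1,z_2)\le d_{Z'}(h(z_1),h(z_2))\le \lambda R\,d_Z(z_1,z_2)$ for all $z_1,z_2\in Z$. Since $\lambda,R>0$, this forces $h$ to be injective, so $h$ restricts to a bijection $Z\to h(Z)$ whose inverse is again quasi-homothetic; I would keep this restriction in mind throughout. I would also record at the outset the two facts that drive everything: for $U=h^{-1}(\widetilde U)$ one has $h(U)\subseteq\widetilde U$, and a point $z$ lies in $Z\setminus U$ exactly when $h(z)\in Z'\setminus\widetilde U$, so that $h(Z\setminus U)=h(Z)\cap(Z'\setminus\widetilde U)$.

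For part (i), I would argue at the level of a single $U=h^{-1}(\widetilde U)$ and its diameter. The lower bound on $\mesh\widetilde\cU$ is clean: for $z_1,z_2\in U$ the distortion inequality gives $d_Z(z_1,z_2)\le\frac{\lambda}{R}\,d_{Z'}(h(z_1),h(z_2))\le\frac{\lambda}{R}\,\diam\widetilde U$, and taking suprema yields $\frac{R}{\lambda}\,\diam U\le\diam\widetilde U$, hence $\frac{R}{\lambda}\mesh\cU\le\mesh\widetilde\cU$ after passing to $\sup_U$. For the upper bound I would like to reverse this: $\diam h(U)\le\lambda R\,\diam U$ follows immediately from the other half of the distortion bound, but this only controls $\diam h(U)$, not $\diam\widetilde U$. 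The step I expect to be the main obstacle is precisely here --- unless every $\widetilde U$ is contained in $h(Z)$, the set $\widetilde U$ may contain points of $Z'$ far from $h(Z)$, so $\diam\widetilde U$ can exceed $\lambda R\,\diam U$ and the upper inequality fails.

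For part (ii), I would compare the pointwise quantities over $x\in V$ and $h(x)\in h(V)$. Using $h(Z\setminus U)=h(Z)\cap(Z'\setminus\widetilde U)$ together with the distortion bound gives $\frac{R}{\lambda}\dist_Z(x,Z\setminus U)\le \dist_{Z'}\bigl(h(x),h(Z)\setminus\widetilde U\bigr)\le \lambda R\,\dist_Z(x,Z\setminus U)$, and I would then take $\sup_{U\in\cU}$ and $\inf_{x\in V}$ to try to transfer this to $L(\widetilde\cU)$. The upper estimate $L(\widetilde\cU)\le\lambda R\,L(\cU)$ goes through, since $\dist_{Z'}(h(x),Z'\setminus\widetilde U)\le\dist_{Z'}(h(x),h(Z)\setminus\widetilde U)$. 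The catch is the lower estimate: the genuine distance $\dist_{Z'}(h(x),Z'\setminus\widetilde U)$ is an infimum over all of $Z'\setminus\widetilde U$, not just over $h(Z)\setminus\widetilde U$; points of $Z'$ outside $h(Z)$ can be nearer to $h(x)$ and drag this distance below $\frac{R}{\lambda}\dist_Z(x,Z\setminus U)$, so the lower bound in (ii) is the one that can break.

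The common source of both gaps is exactly the ambiguity flagged in the footnote: the statement never says in which ambient space $\widetilde\cU$ covers $h(V)$, yet both $\mesh\widetilde\cU$ and the complement-distances defining $L(\widetilde\cU)$ depend on that choice. My conclusion from the plan is that the clean two-sided estimates hold once one computes all quantities on the $Z'$-side relative to the ambient space $h(Z)$ --- equivalently, replacing $\widetilde\cU$ by $\widetilde\cU|_{h(Z)}$ and $L(\widetilde\cU)$ by the relative Lebesgue number $L_{h(Z)}(\widetilde\cU|_{h(Z)},h(V))$ --- so that $h$ acts as an honest quasi-homothetic bijection onto its image. I therefore expect the corrected lemma to be phrased in terms of the relative Lebesgue number of Definition~\ref{def: Lebesgue relative}, with the ambient space made explicit, and the proof to be the computation above carried out on $h(Z)$ rather than on $Z'$.
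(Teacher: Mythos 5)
Your proposal is correct and takes essentially the same approach as the paper: the paper likewise concludes that the statement is ambiguous about the ambient space, that under the interpretation used in the cited source (ambient space $Z'$) the right inequality of (i) and the left inequality of (ii) fail for exactly the reasons you give, and it repairs the lemma precisely as you predict --- by taking $h(Z)$ as the ambient space, phrasing (ii) via the relative Lebesgue number $L_{h(Z)}(\widetilde\cU, h(V))$, and proving both two-sided bounds by the quasi-homothety computation carried out on $h(Z)$ using the bijection $h:Z\to h(Z)$. The only element the paper has that your analysis lacks is an explicit counterexample certifying the failures (the embedding $h:\R^2\to\R^3$, $V=[0,1]^2$, covered by two open boxes in $\R^3$, where $\mesh\widetilde\cU>\mesh\cU$ and $L_{Z'}(\widetilde\cU,h(V))=\frac{1}{8}<\frac{1}{4}=L_Z(\cU,V)$), which turns your ``can break'' into ``does break.''
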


In \cite {BuyaloSchroeder}, the same lemma appears as Lemma 12.2.3, with part (ii) changed as follows:

\emph{(ii') $L(\widetilde\cU) \leq \lambda R \cdot L(\cU)$, where $L(\cU)$ is the Lebesgue number of $\cU$ as a covering of $V$.}

\medskip

This lemma is not proven in either of the sources, as it is considered to be a direct consequence of definition of a quasi-homothetic map.
So let us first recall the following definition:

\begin{definition}\label{def: q-homothetic}
For some $\lambda\geq 1$ and $R>0$, a map $h:A\to B$ between metric spaces is called \emph{$\lambda$--quasi-homothetic with coefficient $R$} provided that
\begin{equation}\label{lambda-quasi-homothetic}
\displaystyle \frac{1}{\lambda} R \cdot d_A(a_1,a_2) \ \leq \ d_B(h(a_1),h(a_2)) \ \leq \ \lambda R\cdot d_A(a_1,a_2), \quad \forall a_1,a_2 \in A.
\end{equation}
\end{definition}
In particular, any such map is continuous and injective. 

\medskip

Now to go back to the statement of Lemma \ref{lemma: BL wrong},
the problem with it is the phrase ``$\widetilde\cU$ is an open covering of $h(V)$'', which is open to interpretation regarding the ambient space for $h(V)$ (which is not mentioned in \cite{BuyaloLebedeva} nor in \cite{BuyaloSchroeder}). This $\widetilde\cU$ could be an open cover of $h(V)$ in $h(V)$ (i.e. $h(V)=\cup \widetilde\cU$), or more generally, $\widetilde\cU$ could be an open covering family of $h(V)$ in $h(Z)$, or even more generally an open covering family of $h(V)$ in $Z'$.

When this lemma is used to prove Theorem 1.1 of \cite{BuyaloLebedeva} (which is also Theorem 12.2.1 of \cite{BuyaloSchroeder}), it turns out it is used for $\widetilde\cU$ being an open covering family of $h(V)$ in $Z'$. Therefore, after adjusting for our notation, the precise statement of Lemma \ref{lemma: BL wrong}, as used in \cite{BuyaloLebedeva}, is:

\begin{lemma}[Lemma \ref{lemma: BL wrong}, stated precisely]\label{lemma: BL wrong precise}
Let $h:Z\to Z'$ be a $\lambda$--quasi-homothetic map with coefficient $R$. Let $V\subseteq Z$, let $\widetilde\cU$ be an open covering family of $h(V)$ in $Z'$, and let $\cU=h^{-1}(\widetilde\cU)$. Then:
\begin{enumerate}
\item[(i)] $\frac{1}{\lambda}R\cdot \mesh\cU\leq \mesh\widetilde\cU \leq \lambda R\cdot \mesh\cU$, \ and
\item[(ii)] $\frac{1}{\lambda}R\cdot L_Z(\cU,V)\leq L_{Z'}(\widetilde\cU, h(V)) \leq \lambda R\cdot L_Z(\cU, V)$.
\end{enumerate}
\end{lemma}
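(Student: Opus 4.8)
The plan is to transport everything through $h$, using that a $\lambda$-quasi-homothetic map is injective, so $h$ restricts to a $\lambda$-quasi-homothetic \emph{bijection} $Z\to h(Z)$ with $h(U)=\widetilde U\cap h(Z)$ for each $U=h^{-1}(\widetilde U)$, and $\cU$ is a covering family of $V$ in $Z$. Since $h$ matches $v\in V$ bijectively with $h(v)\in h(V)$, the two infima in (ii) run over corresponding index sets, and the whole lemma reduces to term-by-term comparisons of $\diam U$ with $\diam\widetilde U$ and of $\dist(v,Z\setminus U)$ with $\dist(h(v),Z'\setminus\widetilde U)$, followed by $\sup_{U}$ and $\inf_{v}$.

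First I would dispatch the two ``expanding'' directions, which follow immediately from \eqref{lambda-quasi-homothetic}. For the mesh, any $a_1,a_2\in U$ have $h(a_1),h(a_2)\in\widetilde U$, so $\frac{1}{\lambda}R\,d_Z(a_1,a_2)\le d_{Z'}(h(a_1),h(a_2))\le\diam\widetilde U$; passing to suprema gives $\frac{1}{\lambda}R\,\mesh\cU\le\mesh\widetilde\cU$. For the Lebesgue number, I would note $Z\setminus U=h^{-1}(Z'\setminus\widetilde U)$, so $h$ carries $Z\setminus U$ bijectively onto $h(Z)\setminus\widetilde U\subseteq Z'\setminus\widetilde U$; using the upper estimate in \eqref{lambda-quasi-homothetic} and then enlarging the set over which the infimum is taken yields $\dist(h(v),Z'\setminus\widetilde U)\le\lambda R\,\dist(v,Z\setminus U)$, whence $L_{Z'}(\widetilde\cU,h(V))\le\lambda R\,L_Z(\cU,V)$. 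This recovers the Buyalo--Schroeder inequality (ii$'$).

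The main obstacle is the remaining two directions: the upper mesh bound $\mesh\widetilde\cU\le\lambda R\,\mesh\cU$ and the lower Lebesgue bound $\frac{1}{\lambda}R\,L_Z(\cU,V)\le L_{Z'}(\widetilde\cU,h(V))$. Here the naive attempt stalls, and I expect it to be not merely hard but impossible, because the $Z'$-side quantities are governed by the part of $Z'$ lying outside $h(Z)$, where $h$ offers no comparison whatsoever: a set $\widetilde U$ can be large while $U=h^{-1}(\widetilde U)$ is tiny, and a point $h(v)$ can sit close to $Z'\setminus\widetilde U$ through points of $Z'\setminus h(Z)$ while being far from $Z\setminus U$ inside $Z$. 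A quick test case makes this concrete: take $Z=\{0,10\}\subseteq\R$, $Z'=\R$, $h$ the inclusion (so $\lambda=R=1$), $V=Z$, and $\widetilde\cU=\{(-1,1),(9,11)\}$; then $\cU=\{\{0\},\{10\}\}$ gives $\mesh\cU=0$ but $\mesh\widetilde\cU=2$, and $L_Z(\cU,V)=10$ but $L_{Z'}(\widetilde\cU,h(V))=1$, violating both of these inequalities.

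Consequently I expect the plan \emph{not} to terminate in a proof of Lemma \ref{lemma: BL wrong precise} as stated; rather, it isolates the ambient space $Z'$ as the culprit. The remedy is to measure $\widetilde\cU$ inside $h(Z)$ instead of $Z'$: replacing $\mesh\widetilde\cU$ by $\mesh(\widetilde\cU|_{h(Z)})$ and $L_{Z'}(\widetilde\cU,h(V))$ by $L_{h(Z)}(\widetilde\cU|_{h(Z)},h(V))$, the bijection $h\colon Z\to h(Z)$ makes every term-by-term comparison two-sided, so both chains of inequalities go through. One then recovers the surviving one-sided $Z'$-statements by combining these with the monotonicity $L_{Z'}(\widetilde\cU,h(V))\le L_{h(Z)}(\widetilde\cU|_{h(Z)},h(V))$ from Lemma \ref{lemma: Lebesgue-restriction-of-family} and $\mesh(\widetilde\cU|_{h(Z)})\le\mesh\widetilde\cU$ from \eqref{eq: mesh restr}. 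I anticipate that this reformulation in $h(Z)$ is the content of the genuinely corrected lemma.
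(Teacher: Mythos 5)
Your analysis is correct and takes essentially the same route as the paper: the statement with ambient space $Z'$ is false in exactly the two directions you identify (the right-hand inequality of (i) and the left-hand inequality of (ii)), the paper refutes these with its own counterexample (Example \ref{example: R2-R3}, the embedding $\R^2\hookrightarrow\R^3$), and your proposed repair --- measuring $\widetilde\cU$ inside $h(Z)$ --- is precisely the paper's corrected Lemma \ref{adjusted-lemma12.2.3}, whose proof proceeds by the same transport-through-$h$ comparison you sketch. Your two-point counterexample $Z=\{0,10\}\subseteq\R$ is even simpler than the paper's, and your recovery of the surviving one-sided $Z'$-inequalities from the corrected lemma via Lemma \ref{lemma: Lebesgue-restriction-of-family} and \eqref{eq: mesh restr} is a correct refinement that the paper leaves implicit.
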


But note that the right side inequality of (i) and the left side inequality  of (ii) of Lemma \ref{lemma: BL wrong precise} are incorrect, 
which is easily shown by examples, like the following one.

\begin{example}\label{example: R2-R3} Take $\R^2$ and $\R^3$ with the Euclidean metric on both, and
let $h:\R^2 \to \R^3$ be the embedding $h(x_1,x_2)=(x_1,x_2,0)$, which is a $1$-quasi-homothetic map with coefficient $1$. Let $V=[0,1]^2\subset \R^2$.  
Let $\widetilde\cU=
\{\widetilde U_1=\left(-\frac{1}{4}, \frac{3}{4}\right)\times \left(-\frac{1}{4}, \frac{5}{4}\right) \times \left(-\frac{1}{8}, \frac{1}{8}\right),
\widetilde U_2=\left(\frac{1}{4}, \frac{5}{4}\right)\times \left(-\frac{1}{4}, \frac{5}{4}\right) \times \left(-\frac{1}{8}, \frac{1}{8}\right) \}$ 
be an open covering family of $h(V)=[0,1]^2\times \{0\}$ in $\R^3$.
 Then $\cU =h^{-1}(\widetilde\cU)=\{U_1=\left(-\frac{1}{4}, \frac{3}{4}\right)\times \left(-\frac{1}{4}, \frac{5}{4}\right),
U_2=\left(\frac{1}{4}, \frac{5}{4}\right)\times \left(-\frac{1}{4}, \frac{5}{4}\right) \}$, so 
 $\mesh \cU=\sqrt{3+\frac{1}{4}}$, $\mesh \widetilde\cU=\sqrt{3+\frac{1}{4}+\frac{1}{16}}$, and $\mesh\widetilde\cU \nleq 1\cdot1\cdot \mesh\cU$.\\
Also,
$L_Z(\cU, V)=L_{\R^2}(\cU, V)=\inf_{(x_1,x_2)\in V} \sup_{U\in \cU} \dist \left((x_1,x_2), \R^2 \setminus U\right)=\frac{1}{4}$,
and \\
$L_{Z'}(\widetilde\cU, h(V))=L_{\R^3}(\widetilde\cU, h(V))=\inf_{(x_1,x_2, 0)\in h(V)} \sup_{\widetilde U\in \widetilde\cU} \dist \left((x_1,x_2,0), \R^3 \setminus \widetilde U\right)=\frac{1}{8}$.\\
 Therefore $\frac{1}{1}\cdot 1\cdot L_Z(\cU, V)\nleq L_{Z'} (\widetilde\cU, h(V))$.
\end{example}

We should note here that the version of Lemma \ref{lemma: BL wrong} published in \cite{BuyaloSchroeder} as Lemma 12.2.3 omits the wrong part of the inequality in part (ii), while the issue with part (i) remains. In any case, the story of Lemma \ref{lemma: BL wrong} shown so far illustrates the importance of introducing detailed notation for the relative Lebesgue number (which includes a covering family, a set to which it is applied and the ambient space for it all), as well as being careful with statements involving them. Now we are ready to state
the corrected, precise version of Lemma \ref{lemma: BL wrong}, and prove it, noting that in it, $\widetilde\cU$ is an open covering family of $h(V)$ in the ambient space $h(Z)$. The proof of this lemma, once it is stated correctly, is indeed a consequence of Definition \ref{def: q-homothetic} for a quasi-homothetic map.

\begin{lemma}\label{adjusted-lemma12.2.3}
Let $h:Z\to Z'$ be a $\lambda$--quasi-homothetic map with coefficient $R$. Let $V\subseteq Z$, let $\widetilde\cU$ be an open covering family of $h(V)$ in $h(Z)$, and let $\cU=h^{-1}(\widetilde\cU)=\{h^{-1}(\widetilde U)\ | \ \widetilde U\in \widetilde\cU\}$. Then:
\begin{enumerate}
\item[(i)] $\frac{1}{\lambda}R\cdot \mesh\cU\leq \mesh\widetilde\cU \leq \lambda R\cdot \mesh\cU$, \ and
\item[(ii)] $\frac{1}{\lambda}R\cdot L_Z(\cU,V)\leq L_{h(Z)}(\widetilde\cU, h(V)) \leq \lambda R\cdot L_Z(\cU, V)$.
\end{enumerate}
\end{lemma}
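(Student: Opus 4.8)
The plan is to exploit the defining inequalities \eqref{lambda-quasi-homothetic} of the $\lambda$-quasi-homothetic map $h$, which are scaling bounds that transfer directly between distances in $Z$ and distances in $h(Z)\subseteq Z'$. The crucial structural observation, which makes the corrected ambient space $h(Z)$ work (and which is exactly what fails when the ambient space is taken to be $Z'$), is that $h$ is a bijection from $Z$ onto $h(Z)$, and since each $\widetilde U\in\widetilde\cU$ is a subset of $h(Z)$, we have $h^{-1}(h(Z)\setminus \widetilde U)=Z\setminus h^{-1}(\widetilde U)=Z\setminus U$ and $h(Z\setminus U)=h(Z)\setminus \widetilde U$. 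This clean compatibility between the set complements on the two sides is what allows the distances $\dist(x,Z\setminus U)$ and $\dist(h(x),h(Z)\setminus\widetilde U)$ to be compared termwise.

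First I would prove (i). For any $\widetilde U\in\widetilde\cU$ with $U=h^{-1}(\widetilde U)$, applying \eqref{lambda-quasi-homothetic} to all pairs of points in $U$ and taking suprema gives $\tfrac{1}{\lambda}R\cdot\diam U\leq \diam\widetilde U\leq \lambda R\cdot\diam U$ (here injectivity guarantees $h$ maps $U$ bijectively onto $\widetilde U$, so diameters correspond). Taking the supremum over the cover, and using that $\widetilde U\mapsto U$ is a bijection between $\widetilde\cU$ and $\cU$, yields the mesh inequalities. Next I would prove (ii) at the pointwise level: for fixed $x\in V$ and fixed $U\in\cU$, I would show $\tfrac{1}{\lambda}R\cdot\dist(x,Z\setminus U)\leq \dist(h(x),h(Z)\setminus\widetilde U)\leq \lambda R\cdot\dist(x,Z\setminus U)$. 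Using the complement identity above, points of $h(Z)\setminus\widetilde U$ are exactly the images $h(z)$ with $z\in Z\setminus U$, so $\dist(h(x),h(Z)\setminus\widetilde U)=\inf_{z\in Z\setminus U}d_{Z'}(h(x),h(z))$, and \eqref{lambda-quasi-homothetic} bounds each $d_{Z'}(h(x),h(z))$ between $\tfrac{1}{\lambda}R\,d_Z(x,z)$ and $\lambda R\,d_Z(x,z)$; taking infima over $z\in Z\setminus U$ preserves these bounds and gives the displayed pointwise inequalities.

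Then I would assemble (ii) from the pointwise bounds by the standard monotonicity of $\sup$ and $\inf$ under a fixed positive rescaling. From the upper pointwise bound, taking $\sup_{U\in\cU}=\sup_{\widetilde U\in\widetilde\cU}$ gives $L_{h(Z)}(\widetilde\cU,h(x))\leq \lambda R\cdot L_Z(\cU,x)$, and then $\inf_{x\in V}$ gives $L_{h(Z)}(\widetilde\cU,h(V))\leq \lambda R\cdot L_Z(\cU,V)$; the lower bound follows symmetrically using the factor $\tfrac{1}{\lambda}R$. The only subtlety to handle carefully is the passage through $\sup$ and $\inf$ when the quantities may be $0$ or $\infty$, and the edge case $Z\setminus U=\emptyset$ (i.e.\ $U=Z$), where $\dist(x,\emptyset)=\infty$ on both sides consistently; I would note these remain compatible under the scaling.

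The main obstacle, and the conceptual heart of the correction, is the complement identity $h(Z)\setminus\widetilde U=h(Z\setminus U)$, which holds precisely because the ambient space on the target side is $h(Z)$ rather than $Z'$. If one instead works in $Z'$, then $Z'\setminus\widetilde U$ contains points outside $h(Z)$ that have no preimage and need not respect the distance bounds \eqref{lambda-quasi-homothetic}; such points can be strictly closer to $h(x)$ than any image $h(z)$, which is exactly what shrinks $L_{Z'}(\widetilde\cU,h(V))$ below $\tfrac{1}{\lambda}R\cdot L_Z(\cU,V)$ in Example \ref{example: R2-R3}. Thus the entire argument hinges on restricting the ambient space so that taking complements commutes with $h$; once that is secured, everything reduces to the termwise scaling estimates, and no genuinely hard step remains.
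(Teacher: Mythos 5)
Your proof is correct and takes essentially the same route as the paper's: both rest on the bijectivity of $h:Z\to h(Z)$, the complement identity $h(Z\setminus U)=h(Z)\setminus\widetilde U$ (which the paper uses implicitly in the step $\dist_{Z'}(h(a),h(Z)\setminus h(U))=\dist_{Z'}(h(a),h(Z\setminus U))$), and termwise application of \eqref{lambda-quasi-homothetic} followed by monotone passage through $\sup$ and $\inf$. The only difference is organizational --- you isolate the pointwise distance comparison before assembling, while the paper runs the whole $\inf$--$\sup$--$\inf$ chain in a single computation --- and your explicit handling of the edge cases ($U=Z$, infinite values) is a minor addition the paper omits.
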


\begin{proof}
First note that $h:Z\to Z'$ is injective, so $h$ taken as a function $:Z\to h(Z)$ is bijective.
Also, $h$ is continuous, so $\cU=h^{-1}(\widetilde\cU)$ is an open covering family of $V$ in $Z$. Therefore there is a bijective correspondence between elements of $\widetilde \cU (\subseteq h(Z))$ and $\cU (\subseteq Z)$: each $\widetilde U$ from $\widetilde\cU$ corresponds to $U=h^{-1}(\widetilde U)$ from $\cU$, and $h(U)=\widetilde U$, that is, $h(\cU)=\widetilde\cU$. \\
For (i): 
\begin{eqnarray*}\displaystyle \mesh \widetilde\cU & = & \sup_{\widetilde U \in \widetilde\cU} \diam \widetilde U=\sup_{\widetilde U \in \widetilde\cU}\  \sup_{b_1, b_2 \in \widetilde U}d_{Z'}(b_1,b_2)
 = \sup_{h( U) \in h(\cU)}\  \sup_{b_1, b_2 \in h( U)}d_{Z'}(b_1,b_2)\\
& = & \sup_{U \in \cU}\  \sup_{a_1, a_2 \in  U}d_{Z'}(h(a_1),h(a_2)) \overset{\eqref{lambda-quasi-homothetic}}\leq \sup_{U \in \cU}\  \sup_{a_1, a_2 \in  U}\lambda R\cdot d_Z(a_1,a_2) \\
& = & \lambda R\cdot \sup_{U\in \cU}\diam U = \lambda R\cdot \mesh\cU.
\end{eqnarray*}
The other inequality for $\mesh$ is proven analogously.\\
For (ii):  
\begin{eqnarray*}\displaystyle
L_{h(Z)}(\widetilde  \cU, h(V)) & = & \inf_{b\in h(V)} L_{h(Z)}(\widetilde\cU, b)=\inf_{b\in h(V)}\  \sup_{\widetilde U\in \widetilde\cU}\ \dist_{Z'}(b,h(Z)\setminus\widetilde U)\\
& = & \inf_{h(a)\in h(V)}\  \sup_{h( U)\in h(\cU)}\ \dist_{Z'}(h(a),h(Z)\setminus h(U))\\
& = & \inf_{a\in V}\  \sup_{U\in \cU}\ \dist_{Z'}(h(a),h(Z\setminus U))=\inf_{a\in V}\  \sup_{U\in \cU} \ \inf_{x\in Z\setminus U} d_{Z'}(h(a), h(x))\\
& \overset{\eqref{lambda-quasi-homothetic}}\geq & \inf_{a\in V}\  \sup_{U\in \cU} \ \inf_{x\in Z\setminus U} \frac{1}{\lambda} R\cdot d_Z(a,x)=\frac{1}{\lambda} R \cdot \inf_{a\in V}\  \sup_{U\in \cU} \dist_Z (a, Z\setminus U)\\
& = & \frac{1}{\lambda} R \cdot \inf_{a\in V} L_Z(\cU, a)= \frac{1}{\lambda} R\cdot L_Z(\cU, V).
\end{eqnarray*}
The other inequality for the Lebesgue number is proven analogously.
\end{proof}

\begin{remark}
Let us note here that the issues with Lemma \ref{lemma: BL wrong} (i.e., \cite[Lemma 3.4]{BuyaloLebedeva}, and also \cite[Lemma 12.2.3]{BuyaloSchroeder})
cause a gap in the original proof of  Theorem 1.1 of \cite{BuyaloLebedeva} (also Theorem 12.2.1 of \cite{BuyaloSchroeder}). But the theorem in question 
can be proven using Lemma \ref{adjusted-lemma12.2.3}, i.e., the corrected version of Lemma \ref{lemma: BL wrong}, together with some other corrections, shown to the author of this paper by Nina Lebedeva in private correspondence.
\end{remark}

\end{document}